\definecolor{colormy}{rgb}{0.8,0.05,0.05}
\definecolor{mycolor}{rgb}{0.25,0.99,0.25}
\tikzstyle directed=[postaction={decorate,decoration={markings,
    mark=at position #1 with {\arrow{>}}}}]
\tikzstyle rdirected=[postaction={decorate,decoration={markings,
    mark=at position #1 with {\arrow{<}}}}]
\newcommand{\Hom}{\mathrm{Hom}}
\newcommand{\Ext}{\mathrm{Ext}}
\newcommand{\Char}{\mathrm{ch}}
\def\C{{\mathcal C}}
\def\Z{{\mathbb Z}}
\def\K{{\mathcal K}}
\def\ST{{\mathcal{ST}}}
\def\R{{\mathcal R}}
\def\pr{\mathrm{pr}}
\def\Ind{\mathrm{Ind}}
\def\B{\mathcal {B}}
\def\F{\mathcal {F}}
\theoremstyle{definition}
\newtheorem{thm}{Theorem}[section]
\newtheorem{cor}[thm]{Corollary}
\newtheorem{prop}[thm]{Proposition}
\theoremstyle{definition}
\newtheorem{remarkcounter}{Remark}
\numberwithin{equation}{section}
\title{The Steinberg linkage class for a reductive algebraic group}
\author{Henning Haahr Andersen}
\email{h.haahr.andersen@gmail.com}
\date{}							
\begin{document}

\begin{abstract}
Let $G$ be a reductive algebraic group over a field of positive characteristic and denote by $\C(G)$ the category of rational $G$-modules. In this note we investigate the subcategory of $\C(G)$ consisting of those modules whose composition factors all have highest weights linked to the Steinberg weight. This subcategory is denoted $\ST$ and called the Steinberg component. We give an explicit equivalence between $\ST$ and $\C(G)$ and we derive some consequences. In particular, our result allows us to relate the Frobenius contracting functor to the projection functor from $\C(G)$ onto $\ST$.

\end{abstract}

\maketitle



\section{Introduction}

Denote by $k$ an algebraically closed field of characteristic $p > 0$ and let $G$ be a reductive algebraic group over $k$. Then the category $\C(G)$ of rational representations of $G $ splits into components associated to the linkage classes of dominant weights. The Steinberg component $\ST$ associated to the linkage class for the Steinberg weight plays a key role in the representation theory of $G$ and the aim of this note is to investigate this special component. We prove that there is an equivalence of categories (explicitly given in both directions) between $\ST$ and the category $\C(G)$ itself. Moreover, we demonstrate that this equivalence carries the important classes of simple modules, (co)standard modules, indecomposable tilting modules, and injective modules in $\C(G)$ into the corresponding classes in this subcategory.

The above equivalence of categories gives of course an isomorphism between the corresponding Grothendieck groups. The classes of the standard (or Weyl) modules in $\C(G)$ and those in $\ST$ constitute bases in the corresponding Grothendieck groups and we make explicit how the classes in the Grothendieck groups of a given module and its counterpart match up. In particular, when a module has a standard or costandard filtration then our equivalence gives rise to equalities among the number of occurrences of a standard or costandard module in the two equivalent categories.

Among the applications we point out the relation between our equivalence functor and the Frobenius contracting functor studied by Gros and Kaneda in \cite{GK}.

\bigskip

\noindent \textbf{Acknowledgements:} I thank M. Gros, J. E. Humphreys and M. Kaneda for useful comments and corrections.

\section{The category of rational modules}

In this section we introduce notation and and recall along the way some of the basic facts on representations of $G$ that we shall need. For details we refer to \cite{RAG}.

\subsection{Basic Notation}Let $T$ be a maximal torus in $G$ and denote by $X =X(T)$ its character group. In the root system $R \subset X$ for $(G,T)$ we choose a set of positive roots $R^+$ and denote by $X^+ \subset X$  the corresponding cone of dominant characters. Then $R^+$ defines an ordering $\leq$ on $X$. It also determines uniquely a Borel subgroup $B$ whose roots are the set of negative roots $-R^+$.

Denote by $S$ the set of simple roots in $R^+$. Then we define the set of restricted characters $X_1 \subset X$ by 
$X_1 = \{\lambda \in X| 0 \leq \langle \lambda, \alpha^{\vee} \rangle < p \text { for all } \alpha \in S\}$.

We set $\C(G)$ equal to the category of rational $G$-modules. It contains all finite dimensional $G$-modules, and if $M \in \C(G)$ then for each vector $m \in M$ the orbit $Gm$ spans a finite dimensional submodule of $M$.

If $K$ is a closed subgroup of $G$ we write similarly $\C(K)$ for the category of rational $K$-modules. The elements of $X$ are the $1$-dimensional modules in $\C(T)$ and they all extend uniquely to $B$. So if $\lambda \in X$ we shall consider it as an object of $\C(T)$ or of $\C(B)$ as the case may be.

The categories $\C(K)$ all have enough injectives. In particular, all objects of $\C(T)$ are themselves injective. So if $M \in \C(T)$ then $M$ decomposes into a sum of $1$-dimensional modules, i.e. elements of $X$. So we can write
 $$ M = \bigoplus_{\lambda \in X} M_{\lambda} $$
where $M_\lambda = \{m \in M | t m = \lambda(t) m \text{ for all } t \in T\}$. As usual we say that $\lambda \in X $ is a weight of $M$ if $M_\lambda \neq 0$ and we call $M_\lambda$ the $\lambda$-weight space in $M$. When $M$ is finite dimensional we set $\Char M = \sum_{\lambda \in X} \dim M_\lambda e^\lambda$ where $e^\lambda$ is the bases element of $\Z[X]$ corresponding to $\lambda $. We call this the (formal) character of $M$.

\subsection{Induction, costandard and simple 
modules}
The induction functor $\Ind_B^G : \C(B) \rightarrow \C(G)$ is a left exact functor which takes finite dimensional $B$-modules into finite dimensional $G$-modules. We write $H^{i}$ for the $i$-th right derived functor of $\Ind_B^G$. Then also each $H^{i}(E)$ is finite dimensional whenever $E \in \C(B)$ is finite dimensional. In particular, $H^0 = \Ind_B^G$. 

If $\lambda \in X$ then its is wellknown that $H^0(\lambda) \neq 0$ if and only if $\lambda  \in X^+$. In the following we often write $\nabla(\lambda)$ instead of $H^0(\lambda)$ and (when $\lambda \in X^+$) we call this the costandard module in $\C(G)$ with highest weight $\lambda$. The socle of $\nabla (\lambda)$ is simple. We denote it $L(\lambda)$ and the family $\{L(\lambda)\}_{\lambda \in X^+}$ constitutes up to isomorphisms the set of simple modules in $\C(G)$. Then for all $\lambda \in X^+$ 
$$ L(\lambda)_\lambda = \nabla(\lambda)_\lambda =k \text { and } \nabla(\lambda)_\mu \neq 0 \text { implies } \mu \leq \lambda. $$

\subsection{Actions of the Weyl group}

The Weyl group $W = N_G(T)/T$ acts naturally on $X$:  $\lambda \mapsto w(\lambda), \lambda \in X, w \in W$. If $M \in C(G)$ then this action of $W$ permutes the weights of $M$. More precisely, we have $\dim M_\lambda = \dim M_{w(\lambda)}$ for all $\lambda \in X, w \in W$. For $M$ finite dimensional this means that $\Char M \in \Z[X]^W$.

We denote by $\ell$ the length function on $W$ and write $w_0$ for the element in $W$ of maximal length.

In addition to the above action of $W$ on $X$ we shall also consider the socalled "dot-action" given by: $w \cdot \lambda = w(\lambda + \rho) - \rho, w \in W, \lambda \in X$. Here $\rho$ is half the sum of the positive roots.

\subsection{Duality and standard modules}
On $\C(G)$ we consider the duality $ D: \C(G] \rightarrow \C(G)$ given by $DM = \bigoplus_{\lambda \in X} M_\lambda^*$ where on the linear dual module $M^*$ we take the contragredient action composed with the Chevalley automorphism on $G $, cf. \cite{RAG}. Then $D$ preserves characters of finite dimensional modules. In particular, this means that $DL(\lambda) \simeq L(\lambda)$ for all $\lambda \in X^+$.

By Serre duality we have for each finite dimensional $B$-module $E$ a (functorial) isomorphism of $G$-modules $H^{i}(E)^* \simeq H^{N-i}(E^* \otimes (-2\rho))$. Here we take the contragredient actions on the dual modules and $N = \dim G/B = | R^+| = \ell(w_0)$.

We define then $\Delta(\lambda) = H^N(w_0 \cdot \lambda)$. Then $\Delta(\lambda) = 0$ unless $\lambda \in X^+$  and we have $\Delta(\lambda) = D\nabla(\lambda)$. Note that when $\lambda \in X^+$ the head of $\Delta(\lambda)$ is $L(\lambda)$.  
We call $\Delta(\lambda)$ the Weyl (or standard) module for $G$ with highest weight $\lambda$.

\subsection{Filtrations and tilting modules}

A module $M \in \C(G)$ is said to have a $\Delta$- (or a standard, or a Weyl) filtration if it has submodules $M^{i}$ with 
$$ 0=M^0 \subset M^1 \subset \dots \subset M, M^{i+1}/M^{i} \simeq \Delta(\lambda_i) \text { for some  } \lambda_i \in X^+  \text { and } M = \bigcup_i M^{i}.$$
We define $\nabla$ (or costandard, or good,  or dual Weyl) filtrations similarly.

A module $M \in \C(G)$ is called tilting if it has both a $\Delta$- and a $\nabla$-filtration. The first example of a tilting module is the trivial module $k = \Delta(0) = \nabla(0)$. It turns out that the subcategory of $\C(G)$ consisting of the tilting modules is a rich and very interesting subcategory. For each $\lambda \in X^+$ there is a unique (up to isomorphisms) indecomposable tilting module $T(\lambda)$ with highest weight $\lambda)$ (i.e. $T(\lambda)_\lambda = k$ and if $T(\lambda)_\mu \neq 0$ then $\mu \leq \lambda$). The Weyl module $\Delta(\lambda)$ is a submodule of $T(\lambda)$ while the dual Weyl module $\nabla(\lambda)$ is a quotient. The composite of the inclusion $\Delta(\lambda) \to T(\lambda)$ and the quotient map $T(\lambda) \to \nabla(\lambda)$ is up to a constant the unique non-zero homomorphism $\Delta(\lambda) \to \nabla(\lambda)$ (mapping $\Delta(\lambda)$ onto $L(\lambda) \subset \nabla(\lambda)$).

\subsection{The Grothendieck group}\label{Grothendieck}
We denote by $\K(G)$ the Grothendieck group of the {\it finite dimensional} modules in $\C(G)$. This is the abelian group generated by the classes $[M]$ of all finite dimensional modules in $\C(G)$ with relations $[M] = [M_1] + [M_2]$ for all short exact sequences $ 0 \to M_1 \to M \to M_2 \to 0$. 

Note that $\K(G)$ is free over $\Z$ with basis $([L(\lambda)])_{\lambda \in X^+}$. Then for each finite dimensional module $M \in \C(G)$ we can write
\begin{equation}
 [M] = \sum_{\lambda \in X^+} [M:L(\lambda)] [L(\lambda)] 
\end{equation}
for unique non-negative integers $[M:L(\lambda)]$. Then $[M:L(\lambda)]$ is the composition factor multiplicity of $L(\lambda)$ in $M$. Note that these multiplicities are also determined by the character of $M$. In particular,
\begin{equation} \Char M = \Char N \text { iff } [M:L(\lambda)] = [N:L(\lambda)] \text { for all } \lambda \in X^+\text { iff } [M] = [N].
\end{equation}
So in particular, we have $[\Delta(\lambda)] = [\nabla(\lambda)]$ for all $\lambda \in X^+$. 

As $\lambda$ is the unique highest weight of both $\Delta(\lambda)$ and $T(\lambda)$ we see that both of the two families $([\Delta(\lambda)])_{\lambda \in X^+}$ and $([T(\lambda)])_{\lambda \in X^+}$ are also bases of $\K(G)$. If $M \in \C(G)$ we let the integers $(M:\Delta(\lambda))$, respectively $(M:T(\lambda))$ be determined by the equation in $\K(G)$ 
\begin{equation}
 [M] = \sum _{\lambda \in X^+} (M:\Delta(\lambda)) [\Delta (\lambda)], 
\end{equation}
respectively,
\begin{equation}
 [M] = \sum _{\lambda \in X^+} (M:T(\lambda)) [T (\lambda)]. 
\end{equation}

Note that whereas the numbers  $[M: L(\lambda)]$ all are non-negative the numbers $ (M:\Delta(\lambda)) $ and $  (M:T(\lambda))$ may well be negative. However, if $M$ has either a $\Delta$- or a $\nabla$-filtration then $(M:\Delta(\lambda))$ equals the number of occurrencies of $\Delta (\lambda )$ or $\nabla(\lambda)$ in this filtration, see part 2 of Remark \ref{firstremark} below. Also if $M$ is tilting then $(M:T(\lambda))$ counts the number of occurrencies of the indecomposable summand $T(\lambda)$ in $M$.

\begin{remarkcounter}\label{firstremark}
\begin{enumerate}
\item Let $\widehat {\K(G)}$ denote the "completion" of $\K(G)$, i.e. the abelian group consisting of all elements
which may be expressed as (possibly infinite) sums of the form $\sum_{\lambda \in X^+} n_{\lambda}[L(\lambda)]$ with $n_\lambda \in \Z$. Then all those elements $M$  in $\C(G)$ which have finite composition factor multiplicities give welldefined classes $[M] \in \widehat {\K(G)}$. This allows us to give sense to the above numbers $[M:L(\lambda)], (M:\Delta(\lambda))$
and $(M:T(\lambda))$ for all such $M \in \C(G)$. In the following we shall often tacitly pass from $\K(G)$ to $\widehat {\K(G)}$ whenever relevant. 
\item  The fundamental vanishing theorem
\begin{equation}
 \Ext_{\C(G)}^j(\Delta(\lambda), \nabla(\mu)) = 0 \text { unless } \lambda = \mu \in X^+ \text { and } j= 0
\end{equation}
implies that $(M:\Delta (\lambda)) = \sum_j (-1)^j \dim \Ext_{\C(G)}^j(\Delta(\lambda), M)$ for all $M \in \C(G)$. It also implies that
$(M:\Delta (\lambda)) = \dim \Hom_{\C(G)}(\Delta(\lambda), M)$ if $M$ has a $\nabla$ filtration, in which case it therefore counts the number of times that $\nabla(\lambda)$ occurs in a $\nabla$ filtration of $M$.

\item To determine the change of bases matrix $((L(\lambda):\Delta(\mu)))_{\lambda, \mu \in X^+}$ is the subject of the famous Lusztig conjecture \cite{Lu80} which is known to hold for very large primes \cite{AJS} and known to fail for a range of small and not so small primes \cite{W}. At least for $p \geq 2h-2$ ($h$ being the Coxeter number for $R$) this matrix can be deduced from a small part of the change of bases matrix $((T(\lambda):\Delta(\mu)))_{\lambda, \mu \in X^+}$. Recent developments determine this matrix in terms of the socalled $p$-canonical bases, see \cite{RW}, \cite{EL}, \cite{AMRW}.

\end{enumerate}

\end{remarkcounter}

\subsection{Frobenius twist and Steinberg's tensor product theorem}

We denote the Frobenius endomorphism on $G$ (as well as on subgroups) by $F$ and its kernel by $G_1$ (respectively $T$, $B$, ...). Then $G_1$ is a normal subgroup scheme of $G$ with $G/G_1 \simeq G$. 

Let $M \in \C(G)$. When we compose the action by $G$ on $M$  by $F$ we get the socalled Frobenius twist of $M$ which we denote $M^{(1)}$. Conversely, if the restriction to $G_1$ on $V \in \C(G)$ is trivial then there exist $M \in \C(G)$ such that $V = M^{(1)}$. In this case we also write $M = V^{(-1)}$. 

Let $\lambda \in X^+$. Then we write $\lambda = \lambda^0 + p \mu$ for unique $\lambda^0 \in X_1$ and $\mu \in X^+$. With this notation  
the Steinberg tensor product theorem says 
 \begin{equation}\label{Steinberg}
L(\lambda) \simeq L(\lambda^0) \otimes L(\mu)^{(1)}.
\end{equation} 

\subsection{Linkage}

Let $\alpha \in S$. Then $s_\alpha \in W$ is the corresponding reflection given by $s_\alpha (\lambda) = \lambda + \langle \lambda, \alpha^\vee \rangle \alpha$, $\lambda \in X$.
 When $n \in \Z$ we denote by $s_{\alpha, n}$ the affine reflection 
$$s_{\alpha, n}(\lambda) = s_\alpha (\lambda) - np\alpha.$$

The affine Weyl group $W_p$ is the group generated by all $s_{\alpha, n}, \alpha \in S, n \in \Z$. Note that in the Bourbaki convention this is the affine Weyl group corresponding to the dual root systen $R^\vee$.

The linkage principle \cite{A80a} says that whenever $L(\lambda)$ and $L(\mu)$ are two composition factors of an indecomposable module $M \in \C(G)$ then $\mu \in W_p \cdot \lambda$. It follows that $\C(G)$ splits into components according to the orbits of $W_p$ in $X$. More precisely, if we set $A = \{\lambda \in X  | 0 < \langle \lambda + \rho, \alpha^\vee \rangle < p \text { for all } \alpha \in R^+\}$, the bottom dominant alcove, then the closure $\bar A =  \{\lambda \in X  | 0 \leq \langle \lambda + \rho, \alpha^\vee \rangle \leq p \text { for all } \alpha \in R^+\}$ is a fundamental domain for the "dot"-action of $W_p$ on $X$.  If $\lambda \in \bar A$ then the component corresponding to $\lambda$ is the subcategory $\B(\lambda) = \{ M \in \C(G) |  [M:L(\mu)] \neq 0 \text { for some } \mu \in X \text { then } \mu \in W_p \cdot \lambda \}.$ In this notation we have $\C(G) = \bigoplus_{\lambda \in \bar A} \B(\lambda)$.

In this note, we are in particular concerned with the Steinberg component  $ \ST = \B((p-1)\rho)$. Note that the $W_p$-orbit through $(p-1)\rho$ equals $(p-1)\rho + p\Z R$ so that
\begin{equation}\ST = \{M\in \C(G) | [M:L(\mu] \neq 0 \text { implies } \mu \equiv (p-1)\rho \;  (\text{mod } p\Z R) \}.
\end{equation}

\section{The equivalence theorem and some consequences}
In this section we shall assume that $G$ is semisimple of adjoint type so that $X = \Z R$. This simplifies our statements and it is easy to extend our results to more general $G$, cf. part 3. in Remark \ref{furtherremarks}.

In addition to the usual multiplication by $n\in \Z$ on $X$ we shall consider the "dot-multiplication" given by $n\cdot \lambda = n(\lambda + \rho) - \rho$. Like the "dot-action" of $W$ on $X$ "dot-multiplication" also fixes $-\rho$. Moreover, "dot-multiplication" by $n$ commutes with the "dot-actions" of $W$ and $W_p$ on $X$. Note also that in this notation we have $W_p\cdot (p-1)\rho = p \cdot X$. Of course this is also the $W_p$ orbit through $-\rho$ but we prefer to write $(p-1)\rho$ in order to emphasize the close connection to the Steinberg weight. Moreover, $(W_p \cdot (p-1)\rho) \cap X^+ = p \cdot X^+$. This means that the simple modules in $\ST$ are $L(p \cdot \lambda)$ with $\lambda \in X^+$. 

\subsection{The equivalence}
Consider now the following two functors
$$ \F:\C(G) \rightarrow \ST
 \text{ given by } \F M = St \otimes M^{(1)}, \; M \in \C(G)$$
and
$$ \F': \ST  \rightarrow \C(G) \text{ given by } \F' N =\Hom_{G_1}(St, N)^{(-1)}), \; N \in \ST.$$
\begin{thm}\label{equiv}

\begin{enumerate} 

\item{} The functor $\F : \C(G) \rightarrow \ST$ is an equivalence of categories with inverse functor $\F'$.
\item $\F$ and $\F'$ are adjoint functors (left and right). They commute with the duality functor $D$.
\item $\F$ takes simples to simples, (dual) Weyl modules to (dual) Weyl modules, indecomposable tilting, respectively injective, modules in $\C(G)$ to indecomposable tilting, respectively injective, modules in $\ST$. In formulas this is
\begin{equation}\F L(\lambda) = L(p \cdot \lambda), \F\Delta(\lambda) = \Delta(p \cdot \lambda), \F\nabla(\lambda) = \nabla(p \cdot \lambda), \F T(\lambda) = T(p \cdot \lambda),\text { and }  \F I(\lambda) = I(p \cdot \lambda)
\end{equation}
for all $ \lambda \in X^+$.
\end{enumerate}

\end{thm}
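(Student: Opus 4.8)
The plan is to build everything on the classical fact that tensoring with the Steinberg module $St = L((p-1)\rho) = \Delta((p-1)\rho) = \nabla((p-1)\rho) = T((p-1)\rho)$ sets up a tight correspondence between $\C(G_1)$-structure and $\C(G)$-structure via the exact sequence $1 \to G_1 \to G \to G \to 1$. First I would record the basic properties of the functor $\F M = St \otimes M^{(1)}$: since $St$ is a $G$-module and $M^{(1)}$ is trivial as a $G_1$-module, $\F$ is exact (tensoring over $k$ is exact), and it visibly lands in $\ST$ because every composition factor of $St\otimes M^{(1)}$ has highest weight of the form $(p-1)\rho + p\nu$ with $\nu \in \Z R = X$, i.e. $\equiv (p-1)\rho \pmod{p\Z R}$. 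For $\F'$, the key input is that for $N \in \ST$ the restriction $N|_{G_1}$ is a (possibly infinite) direct sum of copies of $St$ as a $G_1$-module — this is because $St$ restricted to $G_1$ is simple, injective and projective in $\C(G_1)$ (Steinberg's theorem for $G_1$), and any $G_1$-composition factor of an object of $\ST$ must be $St$ by the congruence condition together with Steinberg's tensor product theorem \eqref{Steinberg}. Hence $\Hom_{G_1}(St,N)$ carries a trivial $G_1$-action, so it is of the form $(\F'N)^{(1)}$ for a well-defined $G$-module $\F'N$, and $\F'$ is exact as well (because $St$ is projective/injective over $G_1$, $\Hom_{G_1}(St,-)$ is exact on $\ST$).

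Next I would prove part (1), the equivalence, by exhibiting the unit and counit. The evaluation map $St \otimes \Hom_{G_1}(St,N) \to N$ and the coevaluation $M \to \Hom_{G_1}(St, St\otimes M)$ are $G$-equivariant, and the claim is that they are isomorphisms. For $\F'\F M = \Hom_{G_1}(St, St\otimes M^{(1)})^{(-1)}$: since $M^{(1)}$ is $G_1$-trivial, $\Hom_{G_1}(St, St\otimes M^{(1)}) \cong \Hom_{G_1}(St,St)\otimes M^{(1)} \cong k \otimes M^{(1)} = M^{(1)}$ (using $\End_{G_1}(St) = k$, $St$ being simple over $G_1$), and untwisting gives $M$; naturality is clear. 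For $\F\F'N = St \otimes \Hom_{G_1}(St,N)$: writing $N|_{G_1} \cong St^{\oplus \Hom_{G_1}(St,N)}$ as above, the evaluation map is exactly the canonical isomorphism $St\otimes\Hom_{G_1}(St,N)\xrightarrow{\sim} N$ of $G_1$-modules, and one checks it is $G$-equivariant. Part (2) then follows formally: $\F$ and $\F'$ are quasi-inverse equivalences, hence each is both left and right adjoint to the other; commutation with $D$ follows from $D\,St \cong St$ (the Steinberg module is self-dual), $D(M^{(1)}) \cong (DM)^{(1)}$, and the fact that $D$ interchanges $\Hom_{G_1}(St,-)$ with itself up to the self-duality of $St$ — a short diagram chase on weight spaces.

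For part (3), the strategy is: first handle $\Delta$ and $\nabla$ directly, then deduce $L$, then $T$ and $I$ formally. For costandard modules, the tensor identity gives $St \otimes H^0(\lambda)^{(1)} \cong St \otimes H^0(p\mu)$... more precisely I would use $H^0(\nu)^{(1)} \cong H^0(p\nu)^{G/G_1}$-type arguments together with the known isomorphism $St \otimes H^0(\lambda)^{(1)} \cong H^0(St \otimes \lambda^{(1)}$-induced$)$; the cleanest route is the standard fact (see \cite{RAG}, e.g. via $\Ind$ in stages through $G_1$) that $St\otimes \nabla(\lambda)^{(1)} \cong \nabla((p-1)\rho + p\lambda) = \nabla(p\cdot\lambda)$, since $(p-1)\rho + p\lambda = p(\lambda+\rho)-\rho = p\cdot\lambda$. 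Dually, applying $D$ and using part (2) gives $\F\Delta(\lambda) = \Delta(p\cdot\lambda)$. Since $\F$ is an exact equivalence it preserves socles and heads, so from $L(\lambda) = \mathrm{soc}\,\nabla(\lambda)$ and the fact that $\F$ sends the inclusion $L(\lambda)\hookrightarrow\nabla(\lambda)$ to an inclusion into $\nabla(p\cdot\lambda)$ whose image is simple, we get $\F L(\lambda) = \mathrm{soc}\,\nabla(p\cdot\lambda) = L(p\cdot\lambda)$ — and this also matches the weight count since the highest weight of $St\otimes L(\lambda)^{(1)}$ is $(p-1)\rho+p\lambda = p\cdot\lambda$, consistent with \eqref{Steinberg}. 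Finally, $T(\lambda)$ is characterized as the indecomposable tilting module of highest weight $\lambda$: $\F$ exact sends $\Delta$-filtrations to $\Delta$-filtrations and $\nabla$-filtrations to $\nabla$-filtrations (by the $\Delta,\nabla$ cases just proved), so $\F T(\lambda)$ is tilting; it is indecomposable because $\F$ is an equivalence; and its highest weight is $p\cdot\lambda$, so $\F T(\lambda) = T(p\cdot\lambda)$. For injectives $I(\lambda)$ (the injective hull of $L(\lambda)$ in $\C(G)$, when it exists — here I would note $\C(G)$ may need to be replaced by a truncated subcategory, or invoke that $G$ semisimple has injective hulls of simples as locally finite modules), $\F$ being an equivalence sends injective hulls to injective hulls, so $\F I(\lambda) = I(p\cdot\lambda)$.

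The main obstacle is the input that $N|_{G_1}$ is a sum of copies of $St$ for every $N \in \ST$, and the consequent exactness of $\F'$ on $\ST$: this is where the congruence condition defining $\ST$, Steinberg's tensor product theorem, and the simplicity/injectivity/projectivity of $St$ over $G_1$ all have to be combined, and one must be careful with infinite-dimensional $N$ (passing to $\widehat{\K(G)}$-style completions and using that every element of $\C(G)$ is a union of finite-dimensional submodules). Once that is in hand, the equivalence and all the "preserves special classes" statements follow by soft category-theoretic arguments plus the one genuinely computational identity $St\otimes\nabla(\lambda)^{(1)} \cong \nabla(p\cdot\lambda)$.
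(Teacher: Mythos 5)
Your proposal is correct, and it follows the paper's broad strategy (exactness of both functors via projectivity/injectivity of $St$ over $G_1$, the Steinberg tensor product theorem, the identity $St\otimes\nabla(\lambda)^{(1)}\simeq\nabla(p\cdot\lambda)$, and formal arguments for tiltings, injectives and adjointness), but it differs in one genuine way: for the counit $\F\F'N\simeq N$ the paper checks the isomorphism on simple modules $L(p\cdot\lambda)=St\otimes L(\lambda)^{(1)}$ and then argues by exactness of $\Hom_{G_1}(St,-)$ and induction on the length of $N$, whereas you prove the structural fact that every $N\in\ST$ is $St$-isotypic on restriction to $G_1$ (since all its $G_1$-composition factors are $St$ and $St$ is projective and injective in $\C(G_1)$) and then show the evaluation map $St\otimes\Hom_{G_1}(St,N)\to N$ is an isomorphism. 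Your route is slightly more self-contained at that point and handles objects of infinite length directly (the paper's induction on length needs, strictly speaking, a direct-limit remark for general objects of $\C(G)$, which you correctly flag); the paper's route is shorter and defers all the work to Steinberg's theorem plus exactness. Two smaller variations: the paper gets $\F L(\lambda)=L(p\cdot\lambda)$ immediately from the Steinberg tensor product theorem, while you deduce it from preservation of socles under the exact equivalence (both fine); and the paper proves left adjointness by the explicit tensor--Hom adjunction and then gets right adjointness from the equivalence, while you invoke the formal fact that quasi-inverse equivalences are biadjoint. One point you leave slightly vague is the isomorphism $\F\nabla(\lambda)\simeq\nabla(p\cdot\lambda)$ itself: the paper attributes it to the Andersen--Haboush theorem, and your "induction in stages" sketch should be pinned to that (or to the corresponding statement in \cite{RAG}) rather than left as "the standard fact"; with that citation in place the argument is complete.
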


\begin{proof} 

Let  Let $M \in \C(G)$ and $N \in \ST$. 

1. First  $\F' (St \otimes M^{(1)}) = \Hom_{G_1}(St, St \otimes M^{(1)})^{(-1)} \simeq \Hom_{G_1}(St, St)^{(-1)} \otimes M \simeq M$, hence $\F' \circ \F  \simeq Id_{\C(G)}$. To prove that $\F \circ \F' \simeq Id_{\ST}$ we need to prove that $St \otimes \Hom_{G_1} (St, N) \simeq N$. If $N$ is simple, i.e. $N = L(p \cdot \lambda)$ for some $\lambda \in X^+$, then by Steinberg's tensor product theorem (\ref{Steinberg}) $N \simeq St \otimes L(\lambda)^{(1)}$. So in this case $\Hom_{G_1}(St, N) \simeq L(\lambda)^{(1)}$ and $(\F \circ \F') N \simeq N$. Now the functor $\Hom_{G_1}(St, -)$ is exact ($St$ is projective as a $G_1$-module) and therefore induction on the length of $N$ (as $G$-module) proves the statement in general.
 
2. We have $\Hom_{G} (\F M, N)  = \Hom_G(St \otimes M^{(1)}, N) \simeq \Hom_G(M^{(1)} ,\Hom_{G_1}(St, N)) = \Hom_G(M, \F' N)$, i.e. $\F$ is left adjoint to $\F'$. Right adjointness follows then from 1. Since $D St \simeq St$ we easily see that $D (\F M) \simeq \F (DM)$. As $\F'$ is the inverse of $\F$ we get also $D \circ \F' =\F' \circ D$.

3.
 When it comes to simple modules it is part of Steinberg's tensor product theorem that $\F$ takes $L(\lambda)$ into $L(p \cdot \lambda)$. It is a special case of the Andersen-Haboush theorem (see \cite{A80b} and \cite{Ha})  that $\F \nabla(\lambda) \simeq \nabla
(p \cdot \lambda)$. It then follows that the exact functor $\F$ will take tiltings to tiltings. Moreover, being an equivalence $\F$ also takes indecomposables to indecomposables. Hence $\F T(\lambda) \simeq T(p\cdot \lambda)$. Finally, $\F$ clearly takes injectives to injectives. 

\end{proof}

\subsection{First consequences}

As immediate consequences of this theorem we get the following two corollaries.

\begin{cor}
Let $M \in \C(G)$. Then we have 
\begin{enumerate} 
\item
$M$ is semisimple iff $St \otimes M^{(1)}$ is.
\item
$M$ is indecomposable iff $St \otimes M^{(1)}$ is.
\item
$M$ has a Weyl (respectively dual Weyl) filtration iff $St \otimes M^{(1)}$ does.
\item
$M$ is tilting iff $St \otimes M^{(1)}$ is.
\item
$M$ is injective iff $St \otimes M^{(1)}$ is injective.
\end{enumerate}
\end{cor}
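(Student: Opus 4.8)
The plan is to read off all five statements from Theorem \ref{equiv}. The two inputs we need are: first, that $\F$ and $\F'$ are mutually inverse \emph{exact} equivalences between $\C(G)$ and $\ST$ (exactness of $\F$ because $M\mapsto M^{(1)}$ and $-\otimes St$ are exact; exactness of $\F'$ because $St$ is projective as a $G_1$-module, so that $\Hom_{G_1}(St,-)$ is exact, and Frobenius untwist is exact); and second, that $\ST=\B((p-1)\rho)$ is one of the blocks in $\C(G)=\bigoplus_{\lambda\in\bar A}\B(\lambda)$, hence a full subcategory closed under subobjects, quotients, extensions and arbitrary direct sums, on which the inclusion into $\C(G)$ has exact left and right adjoint. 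The second point says that for a module already lying in $\ST$ each of the properties ``semisimple'', ``indecomposable'', ``injective'', ``has a $\Delta$-filtration'', ``has a $\nabla$-filtration'' holds relative to $\ST$ if and only if it holds relative to $\C(G)$; I would use this translation tacitly throughout.

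Parts (1), (2) and (5) are then formal. An equivalence of abelian categories preserves and reflects arbitrary direct sums, simple objects, indecomposability, and injective objects (for the last: if $I$ is injective in $\C(G)$ then $\Hom_{\ST}(-,\F I)\cong\Hom_{\C(G)}(\F'(-),I)$ is exact, so $\F I$ is injective in $\ST$, and symmetrically via $\F'$). Combining this with the block remark yields, for instance, $M$ semisimple $\iff$ $St\otimes M^{(1)}$ semisimple in $\ST$ $\iff$ $St\otimes M^{(1)}$ semisimple in $\C(G)$; and likewise for (2) — already observed inside the proof of Theorem \ref{equiv} — and for (5).

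For (3) I would argue with filtrations directly. Applying the exact functor $\F$ to a $\Delta$-filtration of $M$ and invoking $\F\Delta(\lambda)=\Delta(p\cdot\lambda)$ turns it into a $\Delta$-filtration of $\F M=St\otimes M^{(1)}$. For the converse, if $St\otimes M^{(1)}$ admits a $\Delta$-filtration then, because this module lies in $\ST$, every standard subquotient has highest weight in $(W_p\cdot(p-1)\rho)\cap X^+=p\cdot X^+$, i.e. is of the form $\Delta(p\cdot\lambda)=\F\Delta(\lambda)$; applying the exact functor $\F'$ then produces a $\Delta$-filtration of $M$. The $\nabla$-case is word-for-word the same using $\F\nabla(\lambda)=\nabla(p\cdot\lambda)$, and (4) follows at once since a module is tilting precisely when it carries both a $\Delta$- and a $\nabla$-filtration. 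There is no genuine obstacle here; the only point deserving a moment's attention is the verification, used throughout, that these are all ``intrinsic'' properties — unchanged whether the module is viewed in $\ST$ or in the ambient $\C(G)$ — which is exactly what the block decomposition $\C(G)=\bigoplus_{\lambda\in\bar A}\B(\lambda)$ guarantees.
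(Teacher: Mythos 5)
Your argument is correct and is essentially the route the paper intends: the corollary is stated there as an immediate consequence of Theorem \ref{equiv}, and your write-up simply makes explicit the two points the paper leaves tacit (that $\F$, $\F'$ are mutually inverse exact equivalences carrying the distinguished classes of modules to one another, and that semisimplicity, indecomposability, filtrations, tilting and injectivity are intrinsic to the block $\ST$ inside $\C(G)$). No gap; your converse step in (3), identifying the standard subquotients of a module in $\ST$ as $\Delta(p\cdot\lambda)$ before applying $\F'$, is exactly the right justification.
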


\begin{cor}
The entries in the change of bases matrices for the three bases for $\K(G)$ described in Section \ref{Grothendieck} and the corresponding ones in $\K(\ST)$ match up as follows
\begin{equation}
[\Delta(\lambda):L(\mu)] = [\Delta(p \cdot\lambda):L(p\cdot \mu)], (T(\lambda):\Delta(\mu)) = (T(p \cdot\lambda):\Delta(p\cdot \mu))
\end{equation}
for all $\lambda, \mu \in X^+$.
\end{cor}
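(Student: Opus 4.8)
The plan is to deduce both equalities from the single fact that an equivalence of abelian categories induces an isomorphism of Grothendieck groups, and then to track, via Theorem \ref{equiv}(3), where the three distinguished bases of $\K(G)$ are sent.

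First I would observe that $\F$ is exact: it is $St\otimes(-)^{(1)}$, and both the Frobenius twist and tensoring with a fixed module are exact. It also sends finite dimensional modules to finite dimensional modules, so it induces a homomorphism of Grothendieck groups $\F_*:\K(G)\to\K(\ST)$, $[M]\mapsto[\F M]$. Since $\F$ is an equivalence of categories (Theorem \ref{equiv}(1)), $\F_*$ is an isomorphism; moreover, being an equivalence, $\F$ carries a complete, non-redundant list of the simple (resp. Weyl, resp. indecomposable tilting) modules of $\C(G)$ to such a list in $\ST$. Combined with the formulas $\F L(\lambda)=L(p\cdot\lambda)$, $\F\Delta(\lambda)=\Delta(p\cdot\lambda)$, $\F T(\lambda)=T(p\cdot\lambda)$ of Theorem \ref{equiv}(3), this shows that $\F_*$ carries each of the three standard $\Z$-bases $([L(\lambda)])_{\lambda\in X^+}$, $([\Delta(\lambda)])_{\lambda\in X^+}$, $([T(\lambda)])_{\lambda\in X^+}$ of $\K(G)$ bijectively onto the corresponding standard basis $([L(p\cdot\mu)])_{\mu\in X^+}$, $([\Delta(p\cdot\mu)])_{\mu\in X^+}$, $([T(p\cdot\mu)])_{\mu\in X^+}$ of $\K(\ST)$.

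Then I would simply apply $\F_*$ to the defining identities from Section \ref{Grothendieck}. Applying $\F_*$ to $[\Delta(\lambda)]=\sum_{\mu\in X^+}[\Delta(\lambda):L(\mu)][L(\mu)]$ gives, in $\K(\ST)$,
$$[\Delta(p\cdot\lambda)]=\sum_{\mu\in X^+}[\Delta(\lambda):L(\mu)]\,[L(p\cdot\mu)],$$
and comparing this with the expansion $[\Delta(p\cdot\lambda)]=\sum_{\mu\in X^+}[\Delta(p\cdot\lambda):L(p\cdot\mu)]\,[L(p\cdot\mu)]$ in the simple basis of $\K(\ST)$ — legitimate since $\Delta(p\cdot\lambda)\in\ST$, so each of its composition factors is some $L(p\cdot\mu)$ — linear independence yields $[\Delta(\lambda):L(\mu)]=[\Delta(p\cdot\lambda):L(p\cdot\mu)]$. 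The same argument applied to $[T(\lambda)]=\sum_{\mu\in X^+}(T(\lambda):\Delta(\mu))[\Delta(\mu)]$, now using that $\F_*$ identifies the Weyl basis of $\K(G)$ with that of $\K(\ST)$, gives $(T(\lambda):\Delta(\mu))=(T(p\cdot\lambda):\Delta(p\cdot\mu))$.

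I do not expect a genuine obstacle; the only point that deserves a sentence of care is the claim that $L(p\cdot\mu),\Delta(p\cdot\mu),T(p\cdot\mu)$ ($\mu\in X^+$) are full, irredundant lists of the simple, Weyl, and indecomposable tilting modules in $\ST$, hence that their classes form bases of $\K(\ST)$ — but this is immediate from $\F$ being an equivalence together with Theorem \ref{equiv}(3). Alternatively, one could bypass Grothendieck groups: $[\Delta(\lambda):L(\mu)]$ is the multiplicity of $L(\mu)$ in a composition series of $\Delta(\lambda)$, and since $\F$ is exact and faithful it matches composition series of $\Delta(\lambda)$ with those of $\F\Delta(\lambda)=\Delta(p\cdot\lambda)$ factor by factor; likewise $(T(\lambda):\Delta(\mu))=\dim\Hom_{\C(G)}(\Delta(\mu),T(\lambda))$ by Remark \ref{firstremark}(2) (as $T(\lambda)$ has a $\nabla$-filtration), and $\F$ induces an isomorphism $\Hom_{\C(G)}(\Delta(\mu),T(\lambda))\simeq\Hom_{\C(G)}(\Delta(p\cdot\mu),T(p\cdot\lambda))$.
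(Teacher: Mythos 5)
Your proof is correct and follows exactly the route the paper intends: the paper states this corollary without proof as an ``immediate consequence'' of Theorem \ref{equiv}, and your argument simply makes explicit the routine bookkeeping --- the exact equivalence $\F$ induces an isomorphism $\K(G)\to\K(\ST)$ carrying the simple, Weyl and tilting bases to the corresponding bases of $\K(\ST)$ via Theorem \ref{equiv}(3), so the change-of-basis coefficients agree. Your care about why $(L(p\cdot\mu))$, $(\Delta(p\cdot\mu))$, $(T(p\cdot\mu))$ exhaust the respective classes in $\ST$ is exactly the one point worth noting, and it is settled as you say by the equivalence together with the paper's observation that the simples in $\ST$ are the $L(p\cdot\lambda)$, $\lambda\in X^+$.
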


\subsection{Projection onto the Steinberg component}

Let $M \in \C(G)$. Then we have $M = \oplus_{\eta \in \bar A} M(\eta)$ where $M(\eta) $ is the largest submodule of $M$ belonging to $\B(\eta)$. We also write $M(\eta) = \pr_\eta (M)$ where $\pr_\eta$  denotes the projection functor $\C(G) \to \B(\eta)$. Note that $\pr_\eta$ is adjoint to the inclusion functor from $\B(\eta)$ into $\C(G)$. 

The projection functor onto the $\ST$  is also denoted $\pr_\ST$. This functor is given by

\begin{prop}
Let $M \in \C(G)$. Then $\pr_\ST (M) = St \otimes \Hom_{G_1}(St, M)$
\end{prop}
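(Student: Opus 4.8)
The plan is to reduce the statement to Theorem~\ref{equiv}(1) by showing that the functor $\Hom_{G_1}(St,-)$ is "blind" to every linkage block except $\ST$. Write $M=\bigoplus_{\eta\in\bar A}M(\eta)$ for the block decomposition, so that by definition $\pr_\ST(M)=M((p-1)\rho)$. Since $St$ is finite dimensional, $\Hom_{G_1}(St,-)$ commutes with direct sums (and with the filtered union expressing any rational module as the union of its finite dimensional submodules), so the whole proposition will follow once we know: (i) $\Hom_{G_1}(St,M(\eta))=0$ whenever $\eta\neq(p-1)\rho$, and (ii) for $N\in\ST$ one has $St\otimes\Hom_{G_1}(St,N)\simeq N$, which is exactly the content of $\F\circ\F'\simeq\mathrm{Id}_{\ST}$ from Theorem~\ref{equiv}(1) together with the definition of $\F'$.

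The crux is (i). Here I would use that $St$ is projective as a $G_1$-module, so that $\Hom_{G_1}(St,-)$ is exact; combined with $\dim\Hom_{G_1}(St,L)=\delta_{L,St}$ for simple $G_1$-modules $L$ (Schur's lemma), this shows $\Hom_{G_1}(St,N)\neq 0$ forces $St$ to be a composition factor of $N|_{G_1}$. Now Steinberg's tensor product theorem~(\ref{Steinberg}) gives, for $\lambda=\lambda^0+p\mu$ with $\lambda^0\in X_1$, that $L(\lambda)|_{G_1}\simeq L(\lambda^0)^{\oplus\dim L(\mu)}$ as a $G_1$-module; hence $St=L((p-1)\rho)$ can occur in $L(\lambda)|_{G_1}$ only if $\lambda^0=(p-1)\rho$, i.e. $\lambda\in(p-1)\rho+pX=W_p\cdot(p-1)\rho$ (using $X=\Z R$), which means $L(\lambda)$ lies in $\ST$. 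Since every composition factor $L(\lambda)$ of $M(\eta)$ has $\lambda\in W_p\cdot\eta$, and $W_p\cdot\eta$ is disjoint from $W_p\cdot(p-1)\rho$ for $\eta\neq(p-1)\rho$ in $\bar A$, $St$ is not a $G_1$-composition factor of $M(\eta)$ and (i) follows. Putting the pieces together, $\Hom_{G_1}(St,M)=\Hom_{G_1}(St,\pr_\ST M)=(\F'\pr_\ST M)^{(1)}$, and therefore
\[
St\otimes\Hom_{G_1}(St,M)=St\otimes(\F'\pr_\ST M)^{(1)}=\F(\F'\pr_\ST M)=\pr_\ST M ,
\]
the last equality being Theorem~\ref{equiv}(1).

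A second, essentially formal route avoids the block decomposition: since $\pr_\ST$ is a two-sided adjoint of the inclusion $\ST\hookrightarrow\C(G)$, it suffices to check that $M\mapsto St\otimes\Hom_{G_1}(St,M)$ has the same universal property. Noting that $\Hom_{G_1}(St,M)$ carries trivial $G_1$-action, hence equals $W^{(1)}$ for $W=\Hom_{G_1}(St,M)^{(-1)}$, we have $St\otimes\Hom_{G_1}(St,M)=\F(W)\in\ST$; and for $N\in\ST$, writing $N=St\otimes(\F'N)^{(1)}$ and running the tensor identity $\Hom_{G_1}(St,St\otimes W^{(1)})\simeq W^{(1)}$ together with Frobenius reciprocity exactly as in the proof of Theorem~\ref{equiv}(2), one gets $\Hom_G\bigl(N,St\otimes\Hom_{G_1}(St,M)\bigr)\simeq\Hom_G(\F'N,W)\simeq\Hom_G(N,M)$ naturally. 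Either way, the only non-formal input is the interplay between the restriction of simple $G$-modules to $G_1$ and the linkage principle recorded in step (i); everything else is bookkeeping on top of Theorem~\ref{equiv}. I expect step (i)—in particular the clean identification $\Hom_{G_1}(St,N)\neq 0\iff St\mid N|_{G_1}$ and its translation through linkage—to be the main obstacle, the rest being routine.
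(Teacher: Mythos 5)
Your argument is correct and takes essentially the same route as the paper: both rest on the vanishing $\Hom_{G_1}(St,V)=0$ for $V$ in linkage components other than $\ST$, so that $\Hom_{G_1}(St,M)=\Hom_{G_1}(St,\pr_\ST M)$, and then invoke $\F\circ\F'\simeq \mathrm{Id}_{\ST}$ from Theorem \ref{equiv}. The only real difference is that you supply a proof of that vanishing (projectivity of $St$ over $G_1$ plus Steinberg's tensor product theorem), which the paper merely asserts, and your alternative adjoint-uniqueness route is sound but uses the same ingredients.
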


\begin{proof}
Since  $\pr_\ST(M) \in \ST$ it follows from Theorem \ref{equiv}  that $\pr_\ST (M) = St \otimes N^{(1)}$ for some module $N \in \C(G)$. But then $N^{(1)} =\Hom_{G_1} (St, St \otimes N^{(1)}) = \Hom_{G_1} (St, \pr_ST (M)) = \Hom_{G_1} (St, M).$ Here the last equality comes from the fact that $\Hom_{G_1}(St, V) = 0$ if $V$ belongs to a component in $\C(G)$ different from $\ST$. 
\end{proof}

\subsection{Relations with induction and with Frobenius contraction}

The following result gives the relation between the above equivalence of categories and the derived 
functors of induction from $B$ to $G$. We denote by $\F_B$ the endofunctor on $\C(\B)$ given by $\F_B E = E^{(1)} \otimes (p-1)\rho, \; E \in \C(\B)$.

\begin{thm}
We have $\F \circ R^j \Ind_B^G =R^j \Ind_B^G \circ \F_B$ and $R^j \Ind_B^G =  \F' \circ R^j \Ind_B^G \circ \F_B$ for all $j$. In particular, if $\lambda \in X$ then we have $St \otimes H^j(\lambda)^{(1)} = H^j(p \cdot \lambda)$ and  $H^j (\lambda) = \Hom_{G_1} (St, H^j( p \cdot \lambda))^{(-1)}$.
\end{thm}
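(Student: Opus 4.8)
\subsection*{Proof strategy}

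The plan is to establish the first identity, $\F\circ R^j\Ind_B^G\simeq R^j\Ind_B^G\circ\F_B$ as functors on $\C(B)$; the second follows at once by composing with $\F'$ and using $\F'\circ\F\simeq Id_{\C(G)}$ from Theorem~\ref{equiv}, and the two displayed formulas are the special case $E=\lambda\in X$, where $\F_B\lambda=\lambda^{(1)}\otimes(p-1)\rho=p\lambda+(p-1)\rho=p\cdot\lambda$ while $\F H^j(\lambda)=St\otimes H^j(\lambda)^{(1)}$.

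To prove the first identity I would work with the Frobenius kernel $G_1$ and the closed subgroup scheme $G_1B$ lying between $B$ and $G$; here $G_1\triangleleft G_1B$ with $G_1B/G_1\simeq B$ and $G/G_1\simeq G$ via Frobenius. The crucial point is that for any $E\in\C(B)$ the twist $E^{(1)}$, on which $G_1$ acts trivially (it is pulled back along $F\colon G_1B\to B$), is naturally a $G_1B$-module restricting to the usual $E^{(1)}$ on $B$. One then runs through the chain of natural isomorphisms
\begin{align*}
R^j\Ind_B^G(\F_B E)&=R^j\Ind_B^G\bigl(E^{(1)}\otimes(p-1)\rho\bigr)\\
&\simeq R^j\Ind_{G_1B}^G\Bigl(\Ind_B^{G_1B}\bigl(E^{(1)}\otimes(p-1)\rho\bigr)\Bigr)\\
&\simeq R^j\Ind_{G_1B}^G\bigl(E^{(1)}\otimes\Ind_B^{G_1B}((p-1)\rho)\bigr)\\
&\simeq R^j\Ind_{G_1B}^G\bigl(E^{(1)}\otimes St|_{G_1B}\bigr)\\
&\simeq St\otimes R^j\Ind_{G_1B}^G\bigl(E^{(1)}\bigr)\\
&\simeq St\otimes\bigl(R^j\Ind_B^G E\bigr)^{(1)}=\F\bigl(R^j\Ind_B^G E\bigr).
\end{align*}
Here the first $\simeq$ is the Grothendieck spectral sequence for $\Ind_B^G=\Ind_{G_1B}^G\circ\Ind_B^{G_1B}$, which collapses because by the tensor identity inside $\C(G_1B)$ one has $R^q\Ind_B^{G_1B}(E^{(1)}\otimes(p-1)\rho)\simeq E^{(1)}\otimes R^q\Ind_B^{G_1B}((p-1)\rho)=0$ for $q>0$; the second $\simeq$ is again that tensor identity ($E^{(1)}$ being a genuine $G_1B$-module); the third is the identification $\Ind_B^{G_1B}((p-1)\rho)\simeq St|_{G_1B}$; the fourth is the tensor identity inside $\C(G)$, now with the genuine $G$-module $St$; and the last is the standard compatibility of the Frobenius twist with induction through $G_1B$ (using $G/G_1\simeq G$, $G_1B/G_1\simeq B$). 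All of these are standard, cf.\ \cite{RAG}, and each isomorphism is natural in $E$; for infinite-dimensional $E$ one passes to the limit over finite-dimensional $B$-submodules, every functor above commuting with such limits.

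The one place where the special role of the Steinberg weight is really used — and the step I expect to be the main obstacle — is the input $\Ind_B^{G_1B}((p-1)\rho)\simeq St|_{G_1B}$ together with $R^q\Ind_B^{G_1B}((p-1)\rho)=0$ for $q>0$. The point is that restricting $(p-1)\rho$ to $B_1$ realises the Steinberg situation for $G_1$, so that $\Ind_{B_1}^{G_1}((p-1)\rho)$ is the simple (projective and injective) $G_1$-Steinberg module $St|_{G_1}$, of dimension $p^N$, with no higher cohomology; see \cite{RAG}, Chapter~II. For the $G_1B$-equivariant refinement one observes that the counit $\pi\colon St|_B=\Ind_B^G((p-1)\rho)|_B\to(p-1)\rho$ corresponds, under the adjunction between restriction $\C(G_1B)\to\C(B)$ and $\Ind_B^{G_1B}$, to a nonzero $G_1B$-homomorphism $St|_{G_1B}\to\Ind_B^{G_1B}((p-1)\rho)$; this map is injective since $St|_{G_1B}$ is simple over $G_1$, hence over $G_1B$, and it is an isomorphism because both sides have dimension $p^N$.

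Finally, feeding $E=\lambda\in X$ into the first identity gives $St\otimes H^j(\lambda)^{(1)}=\F H^j(\lambda)=H^j(\F_B\lambda)=H^j(p\cdot\lambda)$, and applying $\F'$ (using $\F'\circ\F\simeq Id$) yields $H^j(\lambda)=\F'H^j(p\cdot\lambda)=\Hom_{G_1}(St,H^j(p\cdot\lambda))^{(-1)}$, as asserted. Once the cited facts about $\Ind_B^{G_1B}((p-1)\rho)$ are in place, the argument is a purely formal chain of natural isomorphisms; the work lies in pinning those facts down and in keeping careful track of the several Frobenius/induction compatibilities passing through $G_1B$.
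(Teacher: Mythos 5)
Your argument is correct, and it is complete at the level of detail one could reasonably ask for; the difference from the paper is that the paper disposes of the key first identity $\F\circ R^j\Ind_B^G\simeq R^j\Ind_B^G\circ\F_B$ in one line, by citing \cite{A80b}, whereas you supply a self-contained proof of it. Your proof is essentially the standard argument underlying that reference (and Jantzen's treatment in \cite{RAG}): factor induction through $G_1B$, use the generalized tensor identity twice (once inside $\C(G_1B)$, once inside $\C(G)$ with the genuine $G$-module $St$), identify $\Ind_B^{G_1B}((p-1)\rho)$ with $St|_{G_1B}$ via Frobenius reciprocity, simplicity of $St|_{G_1}$ and the dimension count $p^N$, and finish with the compatibility $R^j\Ind_{G_1B}^G(E^{(1)})\simeq (R^j\Ind_B^G E)^{(1)}$ coming from $G/G_1\simeq G$, $G_1B/G_1\simeq B$. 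The steps where you deduce the second identity by applying $\F'$ (using $\F'\circ\F\simeq Id$ from Theorem \ref{equiv}) and obtain the displayed formulas by taking $E=\lambda$, noting $\F_B\lambda=p\lambda+(p-1)\rho=p\cdot\lambda$, coincide exactly with the paper. One small simplification you could make: the vanishing $R^q\Ind_B^{G_1B}=0$ for $q>0$ holds for \emph{every} $B$-module, since $G_1B/B\simeq G_1/B_1$ is a finite (affine) scheme, so the collapse of the Grothendieck spectral sequence needs no appeal to the Steinberg weight; the only genuinely Steinberg-specific input is the isomorphism $\Ind_B^{G_1B}((p-1)\rho)\simeq St|_{G_1B}$, which you establish correctly. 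So the net effect of your route is a proof that does not outsource the main step, at the cost of reproving a known theorem; the paper's route is shorter but rests on the citation.
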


\begin{proof}
The first identity comes from \cite{A80b}. The second one follows from the first since $\F'$ is the inverse of $\F$. The special case results from applying these two identities  to the $B$-module $\lambda$.

\end{proof}

Recall that Gros and Kaneda\cite{GK} have introduced the Frobenius contracting functor $\phi$ on $\C(G)$. This is a right adjoint of the Frobenius twisting functor  $M \mapsto M^{(1)}$ on $\C(G)$ composed with tensoring twice with $St$. As a $T$-module $\phi$ is determined by $(\phi M)_\lambda = M_{p\lambda}$ for all $\lambda \in X$.

\begin{prop} \label{contract}
The Frobenius contracting functor $\phi$ on $\C(G)$ is the composite $\F' \circ \pr_\ST \circ [St \otimes -]$.
\end{prop}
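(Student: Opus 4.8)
The plan is to identify the functor $\phi$ with the composite $\F' \circ \pr_\ST \circ [St \otimes -]$ by checking that the right-hand side is a right adjoint to the same functor that $\phi$ is a right adjoint of, and then invoking uniqueness of adjoints. By definition, $\phi$ is right adjoint to the functor $\Psi : \C(G) \to \C(G)$ given by $\Psi M = St \otimes St \otimes M^{(1)} = St \otimes \F M$, i.e. $\Psi = [St \otimes -] \circ \F$. So it suffices to show that $\F' \circ \pr_\ST \circ [St \otimes -]$ is right adjoint to $[St \otimes -] \circ \F$.

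First I would assemble the adjunctions for the three building blocks. The functor $[St \otimes -] : \C(G) \to \C(G)$ is self-adjoint (on both sides), because $St$ is finite dimensional and self-dual ($D St \simeq St$, so $St^* \simeq St$ up to the Chevalley twist, which is harmless here since tensoring with a finite-dimensional self-dual module is its own adjoint). Next, $\pr_\ST : \C(G) \to \ST$ is right adjoint to the inclusion $\iota : \ST \hookrightarrow \C(G)$, as recorded just before the Proposition. Finally, by Theorem \ref{equiv}(2), $\F'$ is right adjoint to $\F$ (viewed as functors between $\C(G)$ and $\ST$). Composing right adjoints in reverse order, the functor $\F' \circ \pr_\ST \circ [St \otimes -]$ is right adjoint to $[St \otimes -] \circ \iota \circ \F$. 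But $\iota$ is just the inclusion, so $[St \otimes -] \circ \iota \circ \F$ is, as an endofunctor of $\C(G)$, the same as $[St \otimes -] \circ \F = \Psi$. Hence $\F' \circ \pr_\ST \circ [St \otimes -]$ and $\phi$ are both right adjoint to $\Psi$, so they are naturally isomorphic.

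As a cross-check on the identification — and as an alternative route that avoids the self-adjointness bookkeeping — I would verify the claimed formula on weight spaces, using the description $(\phi M)_\lambda = M_{p\lambda}$. By the Proposition on $\pr_\ST$, $\pr_\ST(St \otimes M) = St \otimes \Hom_{G_1}(St, St \otimes M)$, and applying $\F'$ gives $\F'(\pr_\ST(St \otimes M)) = \Hom_{G_1}(St, St \otimes M)^{(-1)}$. So the claim reduces to the $T$-module identity $\Hom_{G_1}(St, St \otimes M)^{(-1)}_\lambda = M_{p\lambda}$, equivalently $\Hom_{G_1}(St, St \otimes M)_{p\lambda} = M_{p\lambda}$; this follows from $\Hom_{G_1}(St, St) \simeq k$ together with the fact that $St \otimes St$ contains $St \otimes St$ as a $G_1$-summand with the complementary $G_1$-isotypic pieces killed by $\Hom_{G_1}(St,-)$, and a weight count using $\Char St = \Char \Delta((p-1)\rho)$.

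The main obstacle I anticipate is purely one of care rather than depth: making sure the self-duality/Chevalley-twist issue for $[St \otimes -]$ is handled correctly so that it really is two-sidedly self-adjoint, and making sure the inclusion functor $\iota$ is inserted in the right places so that the composite of right adjoints matches $\Psi$ exactly as an endofunctor of $\C(G)$ (not merely up to the embedding $\ST \subset \C(G)$, which is in any case an inclusion of a direct summand and so causes no trouble). Once the adjunctions are lined up, the conclusion is immediate from uniqueness of adjoint functors, and the weight-space computation serves as a reassuring independent confirmation.
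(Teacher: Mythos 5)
Your main argument is correct, but it takes a different route from the paper. The paper simply quotes Theorem 2.1 of \cite{GK} (Donkin's formula $\phi M = \Hom_{G_1}(St, St \otimes M)^{(-1)}$), notes that $\Hom_{G_1}(St,-) = \Hom_{G_1}(St, \pr_\ST(-))$ because $\Hom_{G_1}(St,V)=0$ for $V$ in any other block, and then reads off the statement from the definition of $\F'$; it is a one-line comparison of explicit formulas. You instead work from the characterization of $\phi$ as the right adjoint of $M \mapsto St\otimes St\otimes M^{(1)}$ and assemble the three adjunctions $[St\otimes -]\dashv[St\otimes -]$ (valid since $St^*\simeq St$, as $-w_0(p-1)\rho=(p-1)\rho$), $\iota\dashv\pr_\ST$, and $\F\dashv\F'$ (Theorem \ref{equiv}(2)), concluding by uniqueness of right adjoints; the composition order and the identification $[St\otimes-]\circ\iota\circ\F=\Psi$ are handled correctly. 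What your route buys is independence from Donkin's formula --- in effect you rederive it --- at the cost of relying on the adjoint characterization being the \emph{definition} of $\phi$; if, as in \cite{GK}, $\phi$ is constructed differently (via Frobenius splittings/divided powers), you would still need their adjunction theorem, which is of the same depth as the formula the paper quotes. Two small caveats: your ``alternative route'' via weight spaces cannot stand alone, since agreeing with $(\phi M)_\lambda = M_{p\lambda}$ as $T$-modules does not determine a $G$-functor up to natural isomorphism, and the sentence about ``$St\otimes St$ contains $St\otimes St$ as a $G_1$-summand'' is garbled; as a mere cross-check this does not affect the validity of the adjunction argument.
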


\begin{proof} By Theorem 2.1 in \cite{GK} (attributed to S. Donkin) we have $\phi M = \Hom_{G_1}(St, St \otimes M)^{(-1)}, \; M \in \C(G)$. As observed above we have $\Hom_{G_1}(St, -) = \Hom_{G_1}(St, \pr_\ST \circ -)$ and the proposition therefore follows from Theorem \ref{equiv}.
\end{proof}

The following corollary is Theorem 3.1 in \cite{GK}

\begin{cor}
If $ M \in \C(G)$  has a Weyl filtration, respectively a dual Weyl filtration, so does $\phi M$.
\end{cor}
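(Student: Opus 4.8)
The plan is to use the factorization $\phi = \F' \circ \pr_\ST \circ [St\otimes -]$ supplied by Proposition~\ref{contract} and to push a Weyl (respectively dual Weyl) filtration through the three functors one at a time. The first point is that all three are exact: $St\otimes -$ is exact since $k$ is a field; $\pr_\ST$ is the projection onto the direct summand $\ST$ in the block decomposition $\C(G)=\bigoplus_{\eta\in\bar A}\B(\eta)$, hence exact; and $\F'$ is exact because it is an equivalence of abelian categories (or, directly, because $St$ is projective as a $G_1$-module so $\Hom_{G_1}(St,-)$ is exact, and the Frobenius untwist is exact on its domain). In particular $\phi$ itself is exact.

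Now suppose $M\in\C(G)$ has a Weyl filtration $0=M^0\subset M^1\subset\cdots$ with $M^{i+1}/M^{i}\simeq\Delta(\lambda_i)$ and $M=\bigcup_i M^i$. Tensoring with $St$ yields a filtration of $St\otimes M$ whose successive quotients are $St\otimes\Delta(\lambda_i)$. Since $St=T((p-1)\rho)$ is a tilting module, each $St\otimes\Delta(\lambda_i)$ has a Weyl filtration by the Wang--Donkin--Mathieu theorem on tensor products of modules with Weyl (respectively good) filtrations (see, e.g., \cite{RAG}); splicing these filtrations in gives a Weyl filtration of $St\otimes M$. Applying the exact functor $\pr_\ST$ turns the successive quotients into $\pr_\ST\Delta(\gamma)$, which is $\Delta(\gamma)$ when $\gamma\in p\cdot X^+$ (equivalently $\gamma\equiv(p-1)\rho\bmod p\Z R$) and $0$ otherwise; discarding the trivial steps exhibits a Weyl filtration of $\pr_\ST(St\otimes M)$, and every Weyl module occurring in it is of the form $\Delta(p\cdot\lambda)$ because the simple modules of $\ST$ are exactly the $L(p\cdot\lambda)$. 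Finally, applying the exact equivalence $\F'$ and using $\F'\Delta(p\cdot\lambda)=\Delta(\lambda)$ from Theorem~\ref{equiv}(3), we conclude that $\phi M=\F'\bigl(\pr_\ST(St\otimes M)\bigr)$ has a Weyl filtration.

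For the dual Weyl case one may repeat the argument verbatim with $\nabla$ in place of $\Delta$, or simply observe that $\phi$ commutes with the duality $D$: indeed $D(St\otimes M)\simeq St\otimes DM$ since $D St\simeq St$, $\pr_\ST$ commutes with $D$ because $D$ fixes every $L(\mu)$ and hence preserves blocks, and $\F'$ commutes with $D$ by Theorem~\ref{equiv}(2). As $D$ interchanges Weyl and dual Weyl filtrations, the dual statement follows from the one just proved.

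The only genuinely nontrivial ingredient here is the Wang--Donkin--Mathieu input used in the middle paragraph, that $St\otimes\Delta(\lambda)$ has a Weyl filtration; I expect this to be the main (really the only) obstacle, while everything else is formal, relying just on Proposition~\ref{contract}, the exactness of the three functors, and the behaviour of $\F'$ on (dual) Weyl modules recorded in Theorem~\ref{equiv}.
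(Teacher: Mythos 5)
Your proof is correct and follows essentially the same route as the paper: factor $\phi$ as $\F' \circ \pr_\ST \circ [St \otimes -]$, use that $St$ is a selfdual Weyl (hence tilting) module so that $St \otimes M$ retains the Weyl, respectively dual Weyl, filtration, and then push the filtration through $\pr_\ST$ and $\F'$ using Theorem \ref{equiv}. The only variation is the middle step, where the paper invokes the fact that a direct summand of a module with a Weyl (good) filtration again has one, while you apply the exact projection blockwise to a refined filtration, using that each $\Delta(\gamma)$ lies in a single linkage class --- a slightly more elementary way to reach the same conclusion.
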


\begin{proof} 
If $M$ has a filtration of one of the types in the corollary then so does $St \otimes M$ (because $St$ is a selfdual Weyl module). This property is then inherited by the direct summand $\pr_\ST (St \otimes M)$. Finally $\F'$ preserves this property, see Theorem \ref{equiv}. Hence the corollary is an immediate consequence of Proposition \ref{contract}.
\end{proof}

\subsection{Further remarks}

\begin{remarkcounter}\label{furtherremarks}
\begin{enumerate}
\item 
Let $\F^r = \F \circ \F \circ \dots \circ \F$ denote the composite of $\F$ by itself $r$ times, $r \in \Z_{\geq 0}$. Also write $St_r = L((p^r-1)\rho)$. Then by the Steinberg tensor product theorem we see that $\F^r M = St_r \otimes M^{(r)}$, $M \in \C(G)$. Here $^{(r)}$ denotes twist by the $r$-th Frobenius $F^r$. Denote by $\ST_r$ the subcategory of $\C(G)$ consisting of all modules whose composition factors have the form $L(p^r \cdot \lambda)$ with $\lambda \in X^+$. Then we have a chain of subcategories
$$ \dots \subset  \ST_r \subset \ST_{r-1} \subset \dots \subset \ST_1 = \ST \subset \ST_0 = \C(G).$$
We have that $\ST$ is a summand in $C(G)$. More precisely, $C(G) = \ST \oplus \R$ where $\R$ is the subcategory consisting of those modules, which have composition factors with highest weights in $X^+\setminus p\cdot X^+$.
The restriction of $\F$ to $\ST$ gives an equivalence between $\ST$ and $\ST_2$ (with inverse the restriction of $\F'$ to $\ST_2$). Iterating this we see that in the above chain all the subcategories are equivalent to $\C(G)$ and for each $r$ we have $\C(G) = \ST_r \oplus \R_r$ where $\R_r$ consists of all modules whose composition factors are in $X^+\setminus p^r \cdot X^+$.

\item
Consider the quantum group $U_q$ corresponding to $G$ at a complex root of unity $q$. Let $\ell$ denote the order of $q$ and assume $\ell$ is odd and if $R$ contains a subsystem of type $G_2$ then also that $\ell$ is not divisible by $3$. Then we have a quantum analoque (using obvious notation)  $\F_q : \C(G_{\mathbb C}) \to \ST_q$ of the above equivalence (in the obvious notation) given by $\F_q (M) = St_q \otimes M^{[q]}$. Here $^{[q]}$ denotes the $U_q$-module obtained by precomposing the representation of $G_{\mathbb C}$ on $M$ by the quantum Frobenius homomorphism, see \cite{Lu94}.

Recall that the category $\C(G_{\mathbb C}$ is semisimple (each dominant weight $\lambda$ determines a component whose only simple module is $ L_{\mathbb C}(\lambda) $). Hence $\ST_q$ is also semisimple and we have for each $\lambda \in X^+$
$$ L_q(\ell \cdot \lambda) =  \Delta_q(\ell \cdot \lambda) = \nabla_q(\ell \cdot \lambda) = T_q(\ell \cdot \lambda).$$

Note that in this case we have no "higher" Steinberg subcategories (we cannot iterate the quantum Frobenius nor can we take powers pf $\F_q$). However, if instead of taking a complex root of unity  (or more generally a root of unity in a characteristic $0$ field) we let $q$ be a root of unity in a characteristic $p$ field  then we do have a similar sequence as in 1. above. This time the first term is $\ST_q$ (involving the quantum Frobenius) and the higher Steinberg subcategories are obtained via the Frobenius endomorphism on $G$. We leave the details to the reader. 
\item
By a special point in $X$ we understand a weight $\lambda \in X$ which satisfies $\langle \lambda + \rho, \alpha^\vee \rangle \in p\Z$ for all $\alpha \in R$. Our assumption that $G$ is adjoint implies that all special points are in the same orbit under the action by $W_p$ (in fact already under the action of the subgroup $ p\Z R = p X \subset W_p$). If we relax the assumption that $G $ is adjoint and only assume that $G$ is semisimple then there may be several different orbits of special points. If $\nu \in \bar A$ is a special point then we have a component $\B(\nu)$ in $\C(G)$ consisting of all modules whose compositions factors have highest weights coming from the orbit of $\nu$. We can then define "higher" components $\B_r(\nu)$ by taking the images under restriction to $\B(\nu)$ of $\F^r$. In this way all the results in this section generalizes to non-adjoint groups.  

\end{enumerate}
\end{remarkcounter}

\section{Formulae in the Grothendieck groups}

In \cite{A89} we proved some character formulae. Here we start by giving the corresponding formulae in $\K(G)$.

\begin{prop}
Let $M \in \C(G)$ be finite dimensional. Then we have
\begin{equation}
[M] = \sum_{\lambda \in X^+}(\sum_{w \in W}  (-1)^{\ell(w)}  \dim M_{w \cdot \lambda}) [\Delta(\lambda)] =  \sum_{\lambda \in X^+}(\sum_{w \in W}(-1)^{\ell(w)}  \dim M_{\lambda -w \cdot 0}) [\Delta(\lambda)]
\end{equation}

More generally, we have for all weights $\mu \in X^+$
\begin{equation} 
[\Delta(\mu) \otimes M]  =  \sum_{\lambda \in X^+}(\sum_{w \in W}(-1)^{\ell(w)}  \dim M_{w\cdot \lambda - \mu}) [\Delta(\lambda)].
\end{equation}

\end{prop}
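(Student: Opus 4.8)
The plan is to reduce both formulae to the Weyl character formula together with a coefficient extraction in the group ring $\Z[X]$. Since $M$ is finite dimensional, by Section~\ref{Grothendieck} the class $[M]\in\K(G)$ is determined by $\Char M$, and the integers $(M:\Delta(\lambda))$ are the unique coefficients in the (finite) expansion $\Char M=\sum_{\lambda\in X^+}(M:\Delta(\lambda))\,\chi(\lambda)$, where I write $\chi(\lambda)=\Char\nabla(\lambda)=\Char\Delta(\lambda)$. So it suffices to compute these coefficients. Let $d=\sum_{w\in W}(-1)^{\ell(w)}e^{w\rho}\in\Z[X]$ be the Weyl denominator; the classical Weyl character formula (see \cite{RAG}) gives $d\cdot\chi(\lambda)=\sum_{w\in W}(-1)^{\ell(w)}e^{w(\lambda+\rho)}$ for all $\lambda\in X^+$.

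First I would multiply the expansion of $\Char M$ by $d$, getting $d\cdot\Char M=\sum_{\lambda\in X^+}(M:\Delta(\lambda))\sum_{w\in W}(-1)^{\ell(w)}e^{w(\lambda+\rho)}$. For $\mu\in X^+$ the weight $\mu+\rho$ is dominant and regular, hence occurs among the weights $w(\lambda+\rho)$ ($\lambda\in X^+$, $w\in W$) exactly once, namely for $\lambda=\mu$ and $w=e$; thus the coefficient of $e^{\mu+\rho}$ on the right-hand side is $(M:\Delta(\mu))$. On the left-hand side $d\cdot\Char M=\sum_{w\in W}(-1)^{\ell(w)}\sum_{\nu}\dim M_\nu\,e^{w\rho+\nu}$, so the coefficient of $e^{\mu+\rho}$ is $\sum_{w\in W}(-1)^{\ell(w)}\dim M_{\mu+\rho-w\rho}$. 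Since $\mu+\rho-w\rho=\mu-(w\rho-\rho)=\mu-w\cdot 0$, this yields $(M:\Delta(\mu))=\sum_{w\in W}(-1)^{\ell(w)}\dim M_{\mu-w\cdot 0}$, which is the second displayed expression for $[M]$. To get the first one I would invoke the $W$-invariance $\dim M_\nu=\dim M_{v\nu}$: applying $w^{-1}$ gives $\dim M_{\mu-w\cdot 0}=\dim M_{w^{-1}(\mu+\rho)-\rho}=\dim M_{w^{-1}\cdot\mu}$, and since $\ell(w)=\ell(w^{-1})$ and $w\mapsto w^{-1}$ is a bijection of $W$, we get $\sum_{w}(-1)^{\ell(w)}\dim M_{\mu-w\cdot 0}=\sum_{w}(-1)^{\ell(w)}\dim M_{w\cdot\mu}$.

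For the general formula the only change is to replace $M$ by $\Delta(\mu)\otimes M$, using $\Char(\Delta(\mu)\otimes M)=\chi(\mu)\cdot\Char M$ and $d\cdot\chi(\mu)=\sum_{w\in W}(-1)^{\ell(w)}e^{w(\mu+\rho)}$. Then $d\cdot\Char(\Delta(\mu)\otimes M)=\sum_{w\in W}(-1)^{\ell(w)}\sum_{\nu}\dim M_\nu\,e^{w(\mu+\rho)+\nu}$, and extracting the coefficient of $e^{\lambda+\rho}$ ($\lambda\in X^+$) on both sides exactly as above gives $(\Delta(\mu)\otimes M:\Delta(\lambda))=\sum_{w\in W}(-1)^{\ell(w)}\dim M_{\lambda+\rho-w(\mu+\rho)}$. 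Applying $w^{-1}$ to the weight and using $W$-invariance turns $\dim M_{\lambda+\rho-w(\mu+\rho)}$ into $\dim M_{w^{-1}\cdot\lambda-\mu}$, and re-indexing $w\mapsto w^{-1}$ produces $\sum_{w\in W}(-1)^{\ell(w)}\dim M_{w\cdot\lambda-\mu}$, as claimed; taking $\mu=0$ (so $\Delta(0)=k$) recovers the first formula for $[M]$.

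I do not anticipate a serious obstacle: the argument is a packaging of the standard proof that the $\chi(\lambda)$ form a $\Z$-basis of $\Z[X]^W$. The only points requiring a little care are (i) that the relevant sums are finite because $M$ is finite dimensional, so all manipulations in $\Z[X]$ and the passage to $\widehat{\K(G)}$ are harmless, and (ii) the coincidence-freeness statement that each dominant regular weight $\mu+\rho$ arises exactly once among the $w(\lambda+\rho)$, which is what makes the coefficient extraction unambiguous; both are routine.
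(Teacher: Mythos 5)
Your proof is correct, but it follows a genuinely different route from the paper. The paper does not argue directly: it reduces both identities by additivity in $\K(G)$ to the case $M=\Delta(\nu)$ and then quotes results from \cite{A89} (Corollary 3.6, Proposition 4.3 for the orthogonality $\sum_{w}(-1)^{\ell(w)}\dim\Delta(\mu)_{\lambda-w\cdot 0}=\delta_{\mu,\lambda}$, and the final Remark there for the twisted formula). You instead give a self-contained computation: expand $\Char M$ in the basis $\chi(\lambda)=\Char\Delta(\lambda)$, multiply by the Weyl denominator, and extract the coefficient of $e^{\mu+\rho}$, using that a dominant regular weight occurs exactly once among the $w(\lambda+\rho)$; the twisted case is the same argument applied to $\chi(\mu)\cdot\Char M$, and the untwisted case is $\mu=0$. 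This is essentially a direct proof of the orthogonality that the paper cites, so the mathematical content overlaps, but your version buys independence from \cite{A89} and treats both formulae uniformly, while the paper's buys brevity. The only point worth flagging is your phrase ``classical Weyl character formula'': in characteristic $p$ the identity $d\cdot\Char\nabla(\lambda)=\sum_{w\in W}(-1)^{\ell(w)}e^{w(\lambda+\rho)}$ is not formal representation theory but rests on Kempf's vanishing theorem (so that $\Char H^0(\lambda)$ equals the Euler characteristic, cf.\ \cite{RAG} II.5), and it would be good to say so explicitly; with that reference supplied, every step (finiteness of the sums, injectivity of $\Char$ on $\K(G)$, uniqueness of the coefficient extraction) is sound.
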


\begin{proof}
In the first formula the second equality comes from the fact that $\dim M_\mu = \dim M_{x(\mu)}$ for all $\mu \in X$ and $x \in W$.  That the last term equals $[M]$ is the content of Corollary 3.6 in \cite{A89}. Note that to check this it is by additivity enough to verify that if $\mu \in X^+$ then $\sum_{w \in W}(-1)^{\ell(w)}  \dim \Delta(\mu)_{\lambda -w \cdot 0} = \delta_{\mu, \lambda}$. This in turn is a special case of Proposition 4.3 in \cite{A89}.

To check the second identity it is again by additivity enough to check it for $M = \Delta (\nu), \; \nu \in X^+$. In this  case the formula is equivalent to the equation found in the last Remark in \cite{A89}.

\end{proof}

We are especially interested in the above identities when the weights in question belong to the $W_p$-orbit of the Steinberg weight $(p-1)\rho$. Here we have 

\begin{cor}
Let $M \in \C(G)$ be finite dimensional. Then we have
$$ (St \otimes M : \Delta(p\cdot \lambda)) = \sum_{w \in W} (-1)^{\ell(w)} \dim M_{pw\cdot \lambda}$$
for all $\lambda \in X^+$.
\end{cor}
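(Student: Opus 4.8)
The plan is to deduce this straight from the second identity of the preceding Proposition. The key initial observation is that the Steinberg module $St = L((p-1)\rho)$ coincides with the Weyl module of that highest weight, i.e. $St = \Delta((p-1)\rho)$ (it is a selfdual simple Weyl module). So I would apply the Proposition with $\mu = (p-1)\rho$ to obtain, for any finite dimensional $M \in \C(G)$,
$$[St \otimes M] = \sum_{\eta \in X^+}\Big(\sum_{w \in W}(-1)^{\ell(w)}\dim M_{w\cdot\eta-(p-1)\rho}\Big)[\Delta(\eta)],$$
and hence $(St\otimes M:\Delta(\eta)) = \sum_{w\in W}(-1)^{\ell(w)}\dim M_{w\cdot\eta-(p-1)\rho}$ for every $\eta \in X^+$. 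Note that no hypothesis that $St\otimes M$ lie in $\ST$ is needed: one simply reads off the coefficients indexed by the weights $\eta$ of interest.

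Next I would specialise $\eta = p\cdot\lambda$ with $\lambda \in X^+$ and simplify the weights that occur. Since dot-multiplication by $p$ commutes with the dot-action of $W$ on $X$, we have $w\cdot(p\cdot\lambda) = p\cdot(w\cdot\lambda)$, and for an arbitrary $\nu \in X$ a one-line computation gives $p\cdot\nu - (p-1)\rho = p(\nu+\rho)-\rho-(p-1)\rho = p\nu$. Combining these, $w\cdot(p\cdot\lambda) - (p-1)\rho = p(w\cdot\lambda)$, where on the right $p$ denotes the ordinary scalar multiple. Substituting into the formula above yields precisely
$$(St\otimes M:\Delta(p\cdot\lambda)) = \sum_{w\in W}(-1)^{\ell(w)}\dim M_{p(w\cdot\lambda)},$$
as claimed.

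There is no real obstacle here; the whole argument is a substitution into the Proposition. The one point that requires care is purely notational: one must not conflate the dot-multiplication $p\cdot(-)$ with the ordinary scalar multiplication $p(-)$, the two differing by exactly the shift $(p-1)\rho$ --- and it is this shift that gets absorbed when one trades $\Delta((p-1)\rho)\otimes M$ for $St\otimes M$ and re-expresses everything in terms of the Steinberg orbit $p\cdot X^+$. Once the conventions are pinned down the corollary follows immediately.
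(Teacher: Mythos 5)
Your proposal is correct and follows the paper's own proof exactly: specialize the second identity of the Proposition to $\mu=(p-1)\rho$ (using $St=\Delta((p-1)\rho)$) and simplify $w\cdot(p\cdot\lambda)-(p-1)\rho=p(w\cdot\lambda)$. The only difference is that you spell out the weight computation and the identification of $St$ with a Weyl module, which the paper leaves implicit.
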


\begin{proof}
This comes from the second part of Proposition 4.1 by taking $\mu = (p-1)\rho$ and noting that $w\cdot p\cdot \lambda -(p-1)\rho = p w\cdot \lambda$.

\end{proof}

\begin{remarkcounter}
If in the above corollary we take $M = N^{(1)}$ for some $N \in \C(G)$ then we get $(St \otimes N^{(1)} : \Delta(p\cdot \lambda)) = \sum_{w \in W} (-1)^{\ell(w)}  \dim N_{w\cdot \lambda}$. By Proposition 4.1 this equals $(N:\Delta(\lambda))$ consistent with our equivalence from Theorem \ref{equiv}.
\end{remarkcounter}

\begin{prop} \label{hom-st}
Let $M \in \C(G)$. Then in $\K(G)$ we have $[\Hom_{G_1}(St, M)^{(-1)}] = \sum_{\lambda \in X^+} (M:\Delta(p\cdot \lambda)) [\Delta(\lambda)].$
\end{prop}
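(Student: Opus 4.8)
The plan is to recognise the left-hand side as $\F'$ applied to the Steinberg summand $\pr_\ST M$ of $M$, and then to push this through Theorem \ref{equiv}. The first step is to note that $\Hom_{G_1}(St,M)^{(-1)} = \F'(\pr_\ST M)$: as recorded in the proof of the formula for $\pr_\ST$ above, $\Hom_{G_1}(St,V)$ vanishes whenever $V$ lies in a block $\B(\eta)$ with $\eta\neq(p-1)\rho$, so $\Hom_{G_1}(St,M)=\Hom_{G_1}(St,\pr_\ST M)$; since $\pr_\ST M\in\ST$, its $(-1)$-twist is by definition $\F'(\pr_\ST M)$.

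The main step is to compute $[\pr_\ST M]$ in $\widehat{\K(G)}$. Since $\pr_\ST$ is exact — it is the projection onto a direct summand of $\C(G)$ — it induces an endomorphism of $\K(G)$, and of its completion, so applied to the expansion $[M]=\sum_{\nu\in X^+}(M:\Delta(\nu))[\Delta(\nu)]$ it is enough to know $\pr_\ST[\Delta(\nu)]$. The key observation is that $\pr_\ST[\Delta(\nu)]$ equals $[\Delta(\nu)]$ when $\nu\in p\cdot X^+$ and is $0$ otherwise: $\Delta(\nu)$ is indecomposable with head $L(\nu)$, so by the linkage principle all of its composition factors $L(\mu)$ have $\mu\in W_p\cdot\nu$, and the orbit $W_p\cdot\nu$ meets the Steinberg linkage class $(p-1)\rho+p\Z R = W_p\cdot(p-1)\rho$ if and only if it coincides with it, which (for $\nu\in X^+$) happens exactly when $\nu\in(W_p\cdot(p-1)\rho)\cap X^+ = p\cdot X^+$. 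Hence
$$[\pr_\ST M] = \sum_{\lambda\in X^+}(M:\Delta(p\cdot\lambda))\,[\Delta(p\cdot\lambda)].$$
I expect this bookkeeping, together with the check that it is legitimate in the completion $\widehat{\K(G)}$ of Remark \ref{firstremark}, to be the only point requiring real care.

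Finally, $\F'$ is an equivalence, hence exact, so it induces a homomorphism $\K(\ST)\to\K(G)$ which extends to the completions, and by part (3) of Theorem \ref{equiv} it sends $[\Delta(p\cdot\lambda)]$ to $[\Delta(\lambda)]$. Applying $\F'$ to the last display and invoking the identification from the first step yields
$$[\Hom_{G_1}(St,M)^{(-1)}] = [\F'(\pr_\ST M)] = \sum_{\lambda\in X^+}(M:\Delta(p\cdot\lambda))\,[\Delta(\lambda)],$$
which is the assertion.
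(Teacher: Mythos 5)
Your proof is correct, but it is organized differently from the paper's own argument. The paper proceeds by pure additivity: since $\Hom_{G_1}(St,-)$ is exact, both sides of the identity are additive on short exact sequences, so it suffices to check the formula for $M=\nabla(\mu)$, where it is immediate — both sides vanish unless $\mu = p\cdot\lambda$, and in that case $\nabla(p\cdot\lambda)\simeq St\otimes\nabla(\lambda)^{(1)}$ gives $\Hom_{G_1}(St,\nabla(p\cdot\lambda))^{(-1)}\simeq\nabla(\lambda)$. You instead argue structurally: you identify the left-hand side with $\F'(\pr_\ST M)$ via the vanishing of $\Hom_{G_1}(St,-)$ on non-Steinberg blocks, compute $[\pr_\ST M]$ by projecting the $\Delta$-expansion (using the linkage principle to see that $\pr_\ST$ kills $[\Delta(\nu)]$ for $\nu\notin p\cdot X^+$ and fixes it otherwise), and then transport through the map on Grothendieck groups induced by $\F'$, which takes $[\Delta(p\cdot\lambda)]$ to $[\Delta(\lambda)]$ by Theorem \ref{equiv}(3). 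The two routes rest on the same ingredients — exactness of $\Hom_{G_1}(St,-)$, the block decomposition, and $\F\Delta(\lambda)=\Delta(p\cdot\lambda)$ (equivalently $\nabla(p\cdot\lambda)=St\otimes\nabla(\lambda)^{(1)}$) — but yours packages the proposition as the Grothendieck-group shadow of the functor $\F'\circ\pr_\ST$, which is conceptually transparent and connects directly to Proposition \ref{contract}, at the cost of routing through Theorem \ref{equiv} and of applying $\pr_\ST$ and $\F'$ term-by-term to a possibly infinite expansion in $\widehat{\K(G)}$; the paper's direct check on costandard modules is more elementary and avoids that formal point, though your handling of it is at the same level of informality the paper itself adopts in Remark \ref{firstremark}.
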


\begin{proof}
As $\Hom_{G_1} (St, -)$ is exact, the left hand side is additive on exact sequences. So is the right hand side and hence it is enough to check the identity for $M = \nabla(\mu), \; \mu \in X^+$. Now if $\mu \notin p \cdot X^+$ then both sides are $0$. On the other hand if $\mu = p \cdot \lambda$ for some $\lambda \in X^+$ then $\nabla(\mu) = St \otimes \nabla(\lambda)^{(1)}$ and we get $\Hom_{G_1} (St, \nabla(\mu))^{(-1)} = \nabla(\lambda)$ proving the formula.
\end{proof}

This proposition allows us to obtain a formula for the class of the Frobenius contraction of an arbitrary module in $\C(G)$, cf \cite{GK}.

\begin{cor}
Let $M \in \C(G)$. Then in $\K(G)$ we have
$$  [\phi M] = \sum_{\lambda \in X^+} (St \otimes M : \Delta(p\cdot \lambda)) [\Delta(\lambda)].$$
\end{cor}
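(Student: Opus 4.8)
The plan is to combine Proposition \ref{hom-st} with Proposition \ref{contract} (or rather with the explicit description $\phi M = \Hom_{G_1}(St, St \otimes M)^{(-1)}$ recalled in its proof). The key observation is that the corollary is essentially Proposition \ref{hom-st} applied to the module $St \otimes M$ rather than to $M$ itself.

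First I would recall that by Theorem 2.1 in \cite{GK} we have $\phi M = \Hom_{G_1}(St, St \otimes M)^{(-1)}$ for all $M \in \C(G)$. Now apply Proposition \ref{hom-st} with $M$ replaced by $St \otimes M$: this immediately yields
\begin{equation*}
[\phi M] = [\Hom_{G_1}(St, St \otimes M)^{(-1)}] = \sum_{\lambda \in X^+} (St \otimes M : \Delta(p \cdot \lambda)) [\Delta(\lambda)],
\end{equation*}
which is exactly the claimed formula. One small point to check is that $\phi M$, or equivalently $St \otimes M$, has well-defined (i.e. finite) multiplicities $(St \otimes M : \Delta(p\cdot\lambda))$ so that the right-hand side makes sense in $\widehat{\K(G)}$ as in part 1 of Remark \ref{firstremark}; but tensoring with the finite-dimensional module $St$ preserves the finiteness of composition factor multiplicities, so this is automatic, and the sum is interpreted in the completed Grothendieck group when $M$ is infinite-dimensional.

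Alternatively, and perhaps more in keeping with the structure of this section, one can prove it directly by additivity: since $\phi$ is exact (being $\Hom_{G_1}(St, -)^{(-1)}$ composed with $St \otimes -$, both exact) the left-hand side is additive on short exact sequences, and so is the right-hand side; hence it suffices to verify the identity for $M = \nabla(\mu)$ with $\mu \in X^+$, in which case $St \otimes M = St \otimes \nabla(\mu)$ has a $\nabla$-filtration, its multiplicities $(St \otimes \nabla(\mu) : \Delta(p\cdot\lambda)) = (St \otimes \nabla(\mu):\nabla(p\cdot\lambda))$ are honest filtration multiplicities, and $\phi \nabla(\mu) = \Hom_{G_1}(St, St \otimes \nabla(\mu))^{(-1)}$ can be computed by applying $\Hom_{G_1}(St,-)^{(-1)}$ through this filtration, using that $\Hom_{G_1}(St, \nabla(p\cdot\lambda))^{(-1)} = \nabla(\lambda)$ and $\Hom_{G_1}(St, \nabla(\eta)) = 0$ for $\eta \notin p\cdot X^+$, exactly as in the proof of Proposition \ref{hom-st}.

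There is essentially no obstacle here: the corollary is a formal consequence of the two preceding results, and the only thing requiring a moment's care is the bookkeeping with the completed Grothendieck group for non-finite-dimensional $M$ and the Donkin-type identity $\phi M = \Hom_{G_1}(St, St \otimes M)^{(-1)}$, both of which are already in place in the excerpt. The shortest honest proof is the one-line reduction to Proposition \ref{hom-st} via this identity.
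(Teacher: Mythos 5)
Your proof is correct and follows essentially the same route as the paper: both reduce the corollary to Proposition \ref{hom-st} applied to $St \otimes M$, the only cosmetic difference being that you invoke the Donkin identity $\phi M = \Hom_{G_1}(St, St\otimes M)^{(-1)}$ directly, whereas the paper goes through Proposition \ref{contract} (which rests on that same identity) together with the observation that $(St\otimes M:\Delta(p\cdot\lambda)) = (\pr_\ST(St\otimes M):\Delta(p\cdot\lambda))$. Your one-line reduction is fine, and the remark about interpreting the sum in $\widehat{\K(G)}$ for infinite-dimensional $M$ is a sensible precaution already implicit in the paper.
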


\begin{proof}
We note (as we did a couple of times in Section 3) that $(St \otimes M : \Delta(p\cdot \lambda)) = (\pr_\ST (St \otimes M) : \Delta(p\cdot \lambda))$. Then the corollary comes by combining Propositions \ref{contract} and \ref{hom-st}.

\end{proof}

\begin{remarkcounter}
In the case where $M = N^{(1)}$ the corollary says (when applying our equivalence theorem)  that $[\phi N^{(1)}] = [N]$. This is consistent with the fact that $\phi$ operates as untwisting when applied to a twisted module, cf. \cite{GK}.
\end{remarkcounter}

\vskip 1 cm 
\end{document}